\def\a{\alpha}
\def\b{\beta}
\def\d{\Delta}
\def\Z{\mathbb{Z}}
\def\C{\mathbb{C}}
\numberwithin{equation}{section}
\newtheorem{theo}{Theorem}[section]
\newtheorem{lemm}[theo]{Lemma}
\newtheorem{case}{Case}
\newtheorem{subcase}{Subcase}
\begin{document}

\title[2-local derivations]{2-local derivations on the planar Galilean conformal algebra}

\author{Qiu-Fan Chen, Yan He}

\address{Department of Mathematics, Shanghai Maritime University,
 Shanghai, 201306, China.}\email{chenqf@shmtu.edu.cn}
\address{School of Mathematics and Statistics, Changshu Institute of Technology,
Changshu, Suzhou, 215500, China.}\email{heyan913533012@163.com}

\subjclass[2010]{17B40, 17B65, 17B68}

\keywords{Planar Galilean conformal algebra, derivation, 2-local derivation}

\thanks{This work is supported by National Natural Science Foundation of China (Grant Nos. 12271345 and  12101082), Natural Science Foundation for Youths of Jiangsu Province (Grant  Nos. BK20201051) and Jiangsu Provincial Double-Innovation Doctor Program (Grant  Nos. JSSCBS20210742). }

\begin{abstract}
The present paper is devoted to studying 2-local derivations on the planar Galilean conformal algebra.  We prove that
every 2-local derivation on the planar Galilean conformal algebra is a derivation.
\end{abstract}

\maketitle
\setcounter{tocdepth}{1}\tableofcontents
\begin{center}
\end{center}

\section{Introduction}

Throughout the paper, we denote by $\C ,\,\Z,\,\Z^*$ the sets of complex numbers, integers and nonzero integers, respectively. All algebras and vector spaces considered in the paper are over $\C$.

It is well-known that the derivation algebra of an  algebra plays an important role in the study of the structures of the algebra. As a generalization of derivation, $\check{\text{S}}$emrl introduced the notion of 2-local derivation in \cite{Se},
2-local derivations on the algebra $B(H)$ of all bounded linear operators on the infinite-dimensional separable Hilbert space $H$ were determined therein. The concept of 2-local derivation is actually an important and meaningful property  in its own
right and have been widely studied by many scholars. For a given algebra $\mathcal{L}$, the main problem in this subject is to prove that they
automatically become derivations or to give examples of 2-local derivations of $\mathcal{L}$, which are not derivations. In \cite{ AKR}, it was shown that each 2-local derivation on a finite dimensional semisimple Lie algebra over an algebraically closed field of characteristic zero is a derivation and each finite dimensional nilpotent Lie algebra with dimension larger than two admits a 2-local derivation which is not a derivation. The authors in \cite{WN} proved that all 2-local superderivations on finite dimensional basic classical Lie superalgebras except $A(n,n)$ over an algebraically closed field of characteristic zero are derivations. The idea was exploited and generalized to consider 2-local derivation on some other infinite dimensional Lie (super) algebras, such as the Witt algebras and some of their subalgebras \cite{ AKY,ZCZ}, $W(2, 2)$ algebra  \cite{Ta}, twisted Heisenberg-Virasoro algebra \cite{ZC}, super Virasoro algebra, super $W(2,2)$ algebra \cite{DSL} and  Lie superalgebra of Block type \cite{SYZ}. In addition, the authors showed that every 2-local derivation on the simple Jacobson-Witt algebras over a field $\mathbb{F}$ of prime characteristic $p$ is a derivation in \cite{YZ}.

Galilean conformal algebras were initially introduced by Bagchi  and Gopakumar  in \cite{BG},  where the authors proposed these algebras as a different non-relativistic limit of the AdS/CFT conjecture and studied a non-relativistic conformal symmetry obtained by a parametric contraction of the relativistic conformal group. It was found that the finite  Galilean conformal algebra could be given an infinite-dimensional lift for all space-time dimensions (cf. ~\cite{BG, BG2, HR, MT}). These infinite-dimensional extensions contain a subalgebra isomorphic to the (centerless) Virasoro algebra. In particular, the infinite-dimensional Galilean conformal algebra in 2D turned out to be related to the symmetries of non-relativistic hydrodynamic equations \cite{A}, the BMS/GCA correspondence \cite{B1}, the tensionless limit of string theory \cite{B2} and statistical mechanics \cite{HSSU}. It is clear that the infinite-dimensional Galilean conformal algebra in $(1+1)$ dimensional space-time is the well-known $W(2,2)$ algebra \cite{ZD}. The infinite-dimensional Galilean conformal algebra in $(2+1)$ dimensional space-time, named the planar Galilean conformal algebra $\mathcal{G}$ by Aizawa in \cite{A}, is a Lie algebra with a basis $\{\mathbf{L}_m, \mathbf{H}_m, \mathbf{I}_m, \mathbf{J}_m\mid m\in\Z\}$ and the nontrivial Lie brackets defined by
\begin{equation*}
\aligned
&[\mathbf{L}_n,\mathbf{L}_m]=(m-n)\mathbf{L}_{m+n},\quad [\mathbf{L}_n,\mathbf{H}_m]=m\mathbf{H}_{m+n},\\
&[\mathbf{L}_n,\mathbf{I}_m]=(m-n)\mathbf{I}_{m+n},\quad [\mathbf{L}_n,\mathbf{J}_m]=(m-n)\mathbf{J}_{m+n},\\
&[\mathbf{H}_n,\mathbf{I}_m]=\mathbf{J}_{m+n},\quad [\mathbf{H}_n,\mathbf{J}_m]=-\mathbf{I}_{m+n},\quad \forall m,n\in\Z.
\endaligned
\end{equation*}
Set\begin{equation*}\label{bstr} L_m:=\mathbf{L}_m,\, H_m:=\sqrt{-1}\mathbf{H}_m,\, I_m:=\mathbf{I}_m+\sqrt{-1}\mathbf{J}_m\,\text{ and }\,J_m:=\mathbf{I}_m-\sqrt{-1}\mathbf{J}_m\,\text{ for }\, m\in\Z.\end{equation*}
Then $\{L_m, H_m, I_m, J_m\mid m\in\Z\}$ is another basis of $\mathcal{G}$  with the following nontrivial Lie brackets
\begin{equation*}
\aligned
&[L_n,L_m]=(m-n)L_{m+n},\quad [L_n,H_m]=mH_{m+n},\\
&[L_n,I_m]=(m-n)I_{m+n},\quad [L_n,J_m]=(m-n)J_{m+n},\\
&[H_n,I_m]=I_{m+n},\quad [H_n,J_m]=-J_{m+n},\quad \forall m,n\in\Z.
\endaligned
\end{equation*}
Whittaker modules over the planar Galilean conformal algebra $\mathcal{G}$  were studied in \cite{CY}, where a sufficient and necessary condition for a Whittaker module to be simple was precisely given. Very recently, using simple modules over the finite-dimensional solvable Lie algebras, the authors  constructed  many simple restricted modules over $\mathcal{G}$ in \cite{GG}. The structure theory of $\mathcal{G}$ has also been developed, such as, low-dimensional cohomology groups, biderivations, linear commuting maps, left-symmetric algebra structures  and Lie conformal algebras of  $\mathcal{G}$ were  studied in  \cite{GLP}, \cite{CSY}, \cite{CS} and \cite{HWX}, respectively. The aim of this paper is to study 2-local derivations on the planar Galilean conformal algebra $\mathcal{G}$.

The paper is organized as follows. In Section 2, we recall some fundamental definitions and basic known results that we need in the following. In Section 3, we prove that every 2-local derivation on the planar Galilean conformal algebra $\mathcal{G}$ is automatically a derivation.
\section{Preliminaries}
In this section, we give some necessary definitions and preliminary results.

A {\bf derivation} on a Lie algebra $\mathcal{L}$  is a linear transformation $D:\mathcal{L}\rightarrow\mathcal{L}$ such that the following Leibniz law holds:
$$D([x, y])=[D(x), y]+[x, D(y)],\,\,\forall\,x,y\in\mathcal{L}.$$
The set of all derivations of $\mathcal{L}$ is denoted by $\mathrm{Der}(\mathcal{L})$, which is a Lie algebra under the usual commutant operation. For each $x\in\mathcal{L}$, let
$$\mathrm{ad} (x):\mathcal{L}\rightarrow\mathcal{L},\,\,\,\mathrm{ad} (x) (y)=[x, y], \,\,\forall\,y\in\mathcal{L}.$$ Then $\mathrm{ad}(x)$ is a derivation on $\mathcal{L}$ for any $x\in\mathcal{L}$, which is  called an inner derivation. The set of all inner derivations of $\mathcal{L}$ is denoted by $\mathrm{Inn}(\mathcal{L})$, which is an ideal of $\mathrm{Der}(\mathcal{L})$.

Recall that a map $\Delta: \mathcal{L}\rightarrow\mathcal{L}$ (not necessarily linear) is called  a {\bf 2-local derivation} if for any $x,y\in\mathcal{L}$, there exists a derivation $\Delta_{x,y}\in\mathrm{Der}(\mathcal{L})$ (depending on $x, y$) such that $\Delta(x)=\Delta_{x,y}(x)$ and $\Delta(y)=\Delta_{x,y}(y)$.
In particular, for any $x\in\mathcal{L}$ and $k\in\C$, there exists $\d_{kx,x}\in\mathrm{Der}(\mathcal{L})$ such that
$$\Delta(kx)=\d_{kx,x}(kx)=k\d_{kx,x}(x)=k\Delta(x).$$

We now recall and establish several auxiliary results.
\begin{lemm}\label{prop1}(cf. \cite{GLP})
$\mathrm{Der}(\mathcal{G})=\mathrm{ad}(\mathcal{G})\oplus\C D$, where
$D$ is an outer derivation defined by $$D(L_m)=D(H_m)=0, D(I_m)=I_m, D(J_m)=J_m, \ \ \ \ \forall m\in\Z.$$
\end{lemm}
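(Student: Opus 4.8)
The plan is to exploit the obvious $\Z$-grading $\mathcal{G}=\bigoplus_{n\in\Z}\mathcal{G}_n$, where $\mathcal{G}_n=\C L_n\oplus\C H_n\oplus\C I_n\oplus\C J_n$, and to use that $L_0$ is a grading element, i.e.\ $\mathrm{ad}(L_0)$ acts on $\mathcal{G}_n$ as multiplication by $n$. Given $D\in\mathrm{Der}(\mathcal{G})$, write $\pi_j\colon\mathcal{G}\to\mathcal{G}_j$ for the projection and set $D_k=\sum_{n\in\Z}\pi_{n+k}\circ D\circ\pi_n$, so that $D_k$ sends $\mathcal{G}_n$ into $\mathcal{G}_{n+k}$. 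A routine check — using only that $\mathrm{ad}(L_0)$ is diagonalizable with the $\mathcal{G}_n$ as eigenspaces — shows each $D_k$ is again a derivation and that $D$ agrees with the formal sum $\sum_k D_k$ on every homogeneous element. So it suffices to understand homogeneous derivations and then reassemble.

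For $k\neq 0$, a homogeneous derivation $D_k$ of degree $k$ is inner: applying the Leibniz rule to $[L_0,x]=nx$ for $x\in\mathcal{G}_n$ and using $D_k(x)\in\mathcal{G}_{n+k}$ yields $[D_k(L_0),x]=-kD_k(x)$, that is, $D_k=\mathrm{ad}(-k^{-1}D_k(L_0))$ with $-k^{-1}D_k(L_0)\in\mathcal{G}_k$. To pass from the a priori infinite family $\{D_k\}_{k\neq0}$ to a single inner derivation, note that $D(L_0)=\sum_kD_k(L_0)\in\mathcal{G}$ has finite support in the grading, while for $k\neq0$ one has $D_k=0$ precisely when $D_k(L_0)=0$; hence $D_k=0$ for all but finitely many $k\neq0$, and $\sum_{k\neq0}D_k=\mathrm{ad}\bigl(\sum_{k\neq0}(-k^{-1}D_k(L_0))\bigr)$ is inner. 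Replacing $D$ by $D-\sum_{k\neq0}D_k$, we are reduced to proving the statement for a homogeneous derivation $D_0$ of degree $0$.

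The degree-$0$ case is the technical core. First, Leibniz applied to $[L_0,x]=nx$ gives $[D_0(L_0),\mathcal{G}]=0$, so $D_0(L_0)=0$ because $\mathcal{G}$ is centerless (immediate from the brackets). Expanding $D_0$ on each basis vector as a combination of $L_m,H_m,I_m,J_m$ (same $m$, since $D_0$ is homogeneous), one feeds the six families $[L_n,L_m]=(m-n)L_{m+n}$, $[L_n,H_m]=mH_{m+n}$, $[L_n,I_m]=(m-n)I_{m+n}$, $[L_n,J_m]=(m-n)J_{m+n}$, $[H_n,I_m]=I_{m+n}$, $[H_n,J_m]=-J_{m+n}$ into the Leibniz rule and solves the resulting linear recursions; several are Cauchy-type equations (e.g.\ $(m-n)f(m+n)=mf(m)-nf(n)$ with $f(0)=0$, whose only solutions are $f(m)=cm$), forcing the relevant coefficient sequences to be linear in the index. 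After subtracting a suitable inner derivation $\mathrm{ad}(x)$ with $x\in\mathcal{G}_0$, one is reduced to $D_0(L_m)=D_0(H_m)=0$, $D_0(I_m)=\gamma I_m$, $D_0(J_m)=\delta J_m$ for scalars $\gamma,\delta$ independent of $m$, and at this stage no further relation constrains $\gamma,\delta$. Since $\mathrm{ad}(H_0)$ acts as $1$ on each $I_m$, as $-1$ on each $J_m$, and trivially on $L_m,H_m$, we get $D_0=\mathrm{ad}\bigl(\tfrac{\gamma-\delta}{2}H_0\bigr)+\tfrac{\gamma+\delta}{2}D$. Finally $D\notin\mathrm{ad}(\mathcal{G})$: an inner derivation equal to $D$ must be $\mathrm{ad}(x)$ with $x\in\mathcal{G}_0$ killing all $L_m$, hence a multiple of $\mathrm{ad}(H_0)$, which has the wrong sign on the $J_m$; so the sum $\mathrm{Der}(\mathcal{G})=\mathrm{ad}(\mathcal{G})\oplus\C D$ is direct.

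The main obstacle is the degree-$0$ step: it is entirely elementary but needs careful bookkeeping across the sixteen coefficient sequences (four for each of $D_0(L_m),D_0(H_m),D_0(I_m),D_0(J_m)$), together with keeping track of which bracket relation constrains which component and the small functional-equation arguments. A secondary point worth stating cleanly is the legitimacy of the homogeneous decomposition and the finite-support argument that turns $\sum_{k\neq0}D_k$ into a genuine inner derivation.
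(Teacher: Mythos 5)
The paper does not actually prove this lemma: it is imported verbatim from \cite{GLP} with a ``cf.'' citation, so there is no in-paper argument to compare yours against line by line. Your proof is the standard one for derivation algebras of $\Z$-graded Lie algebras with a grading element, and its skeleton is sound: $\mathrm{ad}(L_0)$ is diagonalizable with eigenspaces $\mathcal{G}_n$, the homogeneous components $D_k$ are derivations, the identity $[D_k(L_0),x]=-kD_k(x)$ shows every $D_k$ with $k\neq 0$ is inner, and the finite support of $D(L_0)$ legitimately packages $\sum_{k\neq 0}D_k$ into a single inner derivation. The degree-zero endgame is also right: $\mathcal{G}$ is centerless (a one-line check on $\mathcal{G}_0$ against $L_1$ and $I_1$ suffices), the reduced form $D_0(I_m)=\gamma I_m$, $D_0(J_m)=\delta J_m$ splits as $\mathrm{ad}\bigl(\tfrac{\gamma-\delta}{2}H_0\bigr)+\tfrac{\gamma+\delta}{2}D$, and your directness argument ($x$ must lie in $\mathcal{G}_0$ because $[x,L_0]=0$, then must be a multiple of $H_0$, which acts with the wrong sign on the $J_m$) is correct.

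The one place where you assert rather than prove is the degree-zero bookkeeping, and that is genuinely where all the content of the lemma sits; as written this is a proof strategy with the core computation outsourced to ``routine recursions.'' It does go through, and in fact faster than your ``sixteen sequences'' framing suggests: for instance, applying Leibniz to $[L_n,I_m]=(m-n)I_{m+n}$ and setting $m=n$ immediately kills the $H$-component of $D_0(L_n)$, and the relations $[H_n,I_m]=I_{m+n}$, $[H_n,J_m]=-J_{m+n}$ together with $[I_n,J_m]=0$ rapidly eliminate most cross terms, leaving only the Witt-type Cauchy equation $a_{m+n}=a_m+a_n$ (for $m\neq n$) and the two free scalars $\gamma,\delta$. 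If you want a self-contained proof rather than a reduction to a believable computation, that verification needs to be written out; otherwise the honest move is to do what the paper does and cite \cite{GLP}.
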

As a direct consequence of Lemma \ref{prop1}, we have the following.
\begin{lemm}\label{prop2}
Let $\Delta$ be a 2-local derivation on the planar Galilean conformal algebra $\mathcal{G}$. Then for every $x,y\in \mathcal{G}$, there exists a derivation  $\Delta_{x,y}$ of $\mathcal{G}$ such that  $\Delta(x)=\Delta_{x,y}(x),\Delta(y)=\Delta_{x,y}(y)$ and it can be written as
$$\Delta_{x,y}=\mathrm{ad}(\sum_{k\in\Z}a_k(x,y)L_k+\sum_{k\in\Z}b_k(x,y)H_k+\sum_{k\in\Z}c_k(x,y)I_k+\sum_{k\in\Z}d_k(x,y)J_k)+\lambda(x,y)D,$$
where $\lambda, a_k,b_k,c_k,d_k (k\in\Z)$ are complex-valued functions on $\mathcal{G}\times\mathcal{G}$ and $D$ is given by Lemma \ref{prop1}.
\end{lemm}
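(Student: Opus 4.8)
The plan is to read the statement off directly from the description of $\mathrm{Der}(\G)$ given in Lemma \ref{prop1}. First I would fix an arbitrary ordered pair $x,y\in\G$ and apply the definition of a 2-local derivation: this yields a derivation $\Delta_{x,y}\in\mathrm{Der}(\G)$ with $\Delta(x)=\Delta_{x,y}(x)$ and $\Delta(y)=\Delta_{x,y}(y)$. Since such a $\Delta_{x,y}$ need not be unique, I would simply fix one such derivation for each pair $(x,y)$ once and for all.

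Next, by Lemma \ref{prop1} we have $\mathrm{Der}(\G)=\mathrm{ad}(\G)\oplus\C D$, so the chosen $\Delta_{x,y}$ decomposes uniquely as $\Delta_{x,y}=\mathrm{ad}(u)+\lambda D$ for some $u\in\G$ and $\lambda\in\C$. Because $\{L_k,H_k,I_k,J_k\mid k\in\Z\}$ is a basis of $\G$, the element $u$ has a unique expansion $u=\sum_{k\in\Z}a_kL_k+\sum_{k\in\Z}b_kH_k+\sum_{k\in\Z}c_kI_k+\sum_{k\in\Z}d_kJ_k$ with only finitely many nonzero coefficients, which is exactly the asserted shape of $\Delta_{x,y}$. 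Letting the pair $(x,y)$ vary and recording, for each pair, the scalar $\lambda$ and the coordinates $a_k,b_k,c_k,d_k$ attached to the selected derivation defines the complex-valued functions $\lambda,a_k,b_k,c_k,d_k$ on $\G\times\G$.

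There is no real obstacle in this argument; it is purely a matter of combining the structural decomposition of $\mathrm{Der}(\G)$ with the definition of a 2-local derivation. The only subtlety worth a remark is that $\Delta_{x,y}$ is generally not determined by $x$ and $y$, so the functions produced are not canonical --- but any fixed choice of the derivations $\Delta_{x,y}$ makes them well defined, since the decomposition in Lemma \ref{prop1} is unique and coordinates with respect to a fixed basis are unique. The genuine work, namely showing that these functions force $\Delta$ itself to be a single derivation, is what will be carried out in Section 3.
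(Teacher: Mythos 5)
Your argument is correct and is exactly what the paper intends: it states this lemma without proof as "a direct consequence of Lemma \ref{prop1}," i.e.\ the definition of a 2-local derivation supplies $\Delta_{x,y}$, and the decomposition $\mathrm{Der}(\mathcal{G})=\mathrm{ad}(\mathcal{G})\oplus\C D$ together with the basis expansion of the inner part gives the stated form. Your remark about fixing a (non-canonical) choice of $\Delta_{x,y}$ for each pair is a fair point of care, but the approach is the same.
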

\section{ 2-local derivations on $\mathcal{G}$}
In this section, we will determine all  2-local derivations on the planar Galilean conformal algebra $\mathcal{G}$.
\begin{lemm}\label{prop3}
Let $\Delta$ be a 2-local derivation on $\mathcal{G}$.
\begin{itemize}\parskip-3pt
  \item[(i)] For a given $i\in\Z$, if $\Delta(L_i)=0$, then for any $y\in \mathcal{G}$,
  \begin{equation*}\label{er1}\d_{L_i,y}=\mathrm{ad}(a_i(L_i,y)L_i+b_0(L_i,y)H_0+c_i(L_i,y)I_i+d_i(L_i,y)J_i)+\lambda(L_i,y)D.\end{equation*}
  \item[(ii)] If $\d(I_0+J_0)=0$, then for any $y\in \mathcal{G}$, we have
  \begin{equation*}\label{er2}\d_{I_0+J_0,y}=\mathrm{ad}(a_0(I_0+J_0,y)L_0+\sum_{k\in\Z}c_k(I_0+J_0,y)I_k+\sum_{k\in\Z}d_k(I_0+J_0,y)J_k).\end{equation*}
\end{itemize}
 \end{lemm}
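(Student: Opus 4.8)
The plan is to exploit the very rigid structure of $\mathrm{Der}(\mathcal{G})$ given in Lemma \ref{prop1}: any derivation is $\mathrm{ad}(w)+\lambda D$ for a unique $w\in\mathcal{G}$ (modulo the center, which here is trivial) and $\lambda\in\C$. For part (i), I would start from the hypothesis $\Delta(L_i)=0$ and, for an arbitrary $y$, write $\Delta_{L_i,y}=\mathrm{ad}(w)+\lambda D$ with $w=\sum_k a_k L_k+\sum_k b_k H_k+\sum_k c_k I_k+\sum_k d_k J_k$ as in Lemma \ref{prop2}. The condition $0=\Delta(L_i)=\Delta_{L_i,y}(L_i)=[w,L_i]$ (note $D(L_i)=0$) forces $w$ to lie in the centralizer of $L_i$ in $\mathcal{G}$. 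So the first key step is to compute $C_{\mathcal{G}}(L_i)$ using the bracket table: $[L_k,L_i]=(i-k)L_{i+k}$, $[H_k,L_i]=-iH_{i+k}$, $[I_k,L_i]=(i-k)I_{i+k}$, $[J_k,L_i]=(i-k)J_{i+k}$. Setting each homogeneous component of $[w,L_i]$ to zero, one sees the $L$-part can only survive in degree $i+k=0$ with coefficient $(i-k)=2i\neq0$ unless $i=0$ — here one must be slightly careful and treat $i=0$ and $i\neq0$ uniformly by noting that the surviving $L$-term is $a_iL_i$ (coefficient of $L_{2i}$ vanishes iff the $L_k$ chosen is $L_i$ itself), the surviving $H$-term is $b_0H_0$ (the factor is $-i$, but $H_i$ contributes $[H_i,L_i]=-iH_{2i}$; the only way an $H$-term centralizes is $H_0$ when the factor $-i\cdot H_{i}$... ). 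In fact the cleanest route is: $[w,L_i]=0$ componentwise gives $a_k(i-k)=0$, $b_k\cdot(-i)\neq$ wait — re-express: the coefficient of $L_{m}$ in $[w,L_i]$ is $a_{m-i}(i-(m-i))=a_{m-i}(2i-m)$, which vanishes for all $m$ iff $a_k=0$ for $k\neq i$; similarly $c_k=d_k=0$ for $k\neq i$, and the $H$-coefficient of $H_m$ is $-i\,b_{m-i}$, vanishing for all $m$ iff $b_k=0$ for all $k$ when $i\neq0$ and iff $b_k=0$ for $k\neq0$... — this is exactly the asymmetry the statement records, so I would just carefully carry out this componentwise vanishing, keeping the $H_0$ term because $[H_0,L_i]=0$ identically (the bracket is $[L_i,H_0]=0\cdot H_i=0$). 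That yields $w\in\C L_i+\C H_0+\C I_i+\C J_i$, which is precisely the claimed form after renaming coefficients $a_i(L_i,y)$, etc.

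For part (ii), the argument is parallel but the centralizer of $I_0+J_0$ is what controls things. Writing $\Delta_{I_0+J_0,y}=\mathrm{ad}(w)+\lambda D$, the hypothesis $\Delta(I_0+J_0)=0$ gives $0=[w,I_0+J_0]+\lambda D(I_0+J_0)=[w,I_0+J_0]+\lambda(I_0+J_0)$. The second key step is to compute $[w,I_0]$ and $[w,J_0]$ from the table: $[L_k,I_0]=-kI_k$, $[L_k,J_0]=-kJ_k$, $[H_k,I_0]=I_k$, $[H_k,J_0]=-J_k$, and $I$'s and $J$'s commute among themselves. So $[w,I_0+J_0]=\sum_k(-k\,a_k)(I_k+J_k)+\sum_k b_k(I_k-J_k)$. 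Adding $\lambda(I_0+J_0)$ and setting the coefficient of each $I_k$ and each $J_k$ to zero: for $k\neq0$ we get $-k a_k+b_k=0$ and $-k a_k-b_k=0$, forcing $a_k=b_k=0$ for $k\neq0$; for $k=0$ we get $b_0+\lambda=0$ and $-b_0+\lambda=0$, hence $\lambda=b_0=0$. Thus $w\in\C L_0+\sum_k\C I_k+\sum_k\C J_k$ and $\lambda=0$, which is exactly the asserted formula for $\Delta_{I_0+J_0,y}$.

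The genuine content of the lemma is entirely in these two centralizer computations; there is no subtlety beyond bookkeeping once one has Lemma \ref{prop2} in hand. The one place to be careful — the main (mild) obstacle — is the $H$-part in (i): one must remember that $[L_i,H_m]=mH_{m+i}$ kills $H_0$ regardless of $i$, so $H_0$ always survives in $C_{\mathcal{G}}(L_i)$, whereas in (ii) the $H$-part is eliminated entirely because the $\lambda D$ term ties $b_0$ to $\lambda$ and both are forced to vanish. I would present both parts by first stating the relevant centralizer (resp. the solution set of $[w,I_0+J_0]=-\lambda(I_0+J_0)$) as a short computation, then reading off the coefficient functions. No step here should require more than a few lines.
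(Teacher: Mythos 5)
Your proposal is correct and follows essentially the same route as the paper: write $\Delta_{L_i,y}$ (resp.\ $\Delta_{I_0+J_0,y}$) in the general form of Lemma \ref{prop2}, apply it to $L_i$ (resp.\ $I_0+J_0$), and kill coefficients componentwise, which is exactly the centralizer computation you describe. The only blemish is the transient miscomputation $[H_k,L_i]=-iH_{i+k}$ (it is $-kH_{i+k}$), but you self-correct and land on the right conclusion that only $b_0H_0$ survives in (i) and that $\lambda=b_0=0$ in (ii).
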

\begin{proof}
From Lemma \ref{prop2}, we can first assume that
\begin{align*}
\d_{L_i,y}&=\mathrm{ad}(\sum_{k\in\Z}a_k(L_i,y)L_k+\sum_{k\in\Z}b_k(L_i,y)H_k+\sum_{k\in\Z}c_k(L_i,y)I_k+\sum_{k\in\Z}d_k(L_i,y)J_k)\\
& \ \ \ +\ \lambda(L_i,y)D,\\
\d_{I_0+J_0,y}&=\mathrm{ad}(\sum_{k\in\Z}a_k(I_0+J_0,y)L_k+\sum_{k\in\Z}b_k(I_0+J_0,y)H_k+\sum_{k\in\Z}c_k(I_0+J_0,y)I_k\\
& \ \ \ +\ \sum_{k\in\Z}d_k(I_0+J_0,y)J_k)+\lambda(I_0+J_0,y)D,
\end{align*}
where $\lambda, a_k,b_k,c_k,d_k (k\in\Z)$ are complex-valued functions on $\mathcal{G}\times\mathcal{G}$.

{\rm (i)}  When $\Delta(L_i)=0$, we have
\begin{align*}
\Delta(L_i)&=\d_{L_i,y}(L_i)\\
&=[\sum_{k\in\Z}a_k(L_i,y)L_k+\sum_{k\in\Z}b_k(L_i,y)H_k+\sum_{k\in\Z}c_k(L_i,y)I_k+\sum_{k\in\Z}d_k(L_i,y)J_k,L_i]\\
& \ \ \ +\ \lambda(L_i,y)D(L_i)\\
&=\sum_{k\in\Z}((i-k)a_k(L_i,y)L_{k+i}-kb_k(L_i,y)H_{k+i}+(i-k)c_k(L_i,y)I_{k+i}\\
& \ \ \ +\ (i-k)d_k(L_i,y)J_{k+i})=0.
\end{align*}
This means
$$(i-k)a_k(L_i,y)=kb_k(L_i,y)=(i-k)c_k(L_i,y)=(i-k)d_k(L_i,y)=0, \ \ \ \ \forall k\in\Z,$$
from which we obtain $a_k(L_i,y)=c_k(L_i,y)=d_k(L_i,y)=0$ for all $k\in\Z$ with $k\neq i$ and $b_k(L_i,y)=0$ for all $k\in\Z^*$. So {\rm (i)} holds.

{\rm (ii)} When $\d(I_0+J_0)=0$, we compute
\begin{align*}
\Delta(I_0+J_0)&=\d_{I_0+J_0,y}(I_0+J_0)\\
&=[\sum_{k\in\Z}a_k(I_0+J_0,y)L_k+\sum_{k\in\Z}b_k(I_0+J_0,y)H_k+\sum_{k\in\Z}c_k(I_0+J_0,y)I_k\\
& \ \ \ +\ \sum_{k\in\Z}d_k(I_0+J_0,y)J_k,I_0+J_0]+\lambda(I_0+J_0,y)D(I_0+J_0)\\
&=\sum_{k\in\Z}(-ka_k(I_0+J_0,y)+b_k(I_0+J_0,y))I_k\\
& \ \ \ -\ \sum_{k\in\Z}(ka_k(I_0+J_0,y)+b_k(I_0+J_0,y))J_k+\lambda(I_0+J_0,y)(I_0+J_0)=0.
\end{align*}
This implies
\begin{eqnarray*}
&b_0(I_0+J_0,y)+\lambda(I_0+J_0,y)=-b_0(I_0+J_0,y)+\lambda(I_0+J_0,y)=0,\\
&b_k(I_0+J_0,y)-ka_k(I_0+J_0,y)=b_k(I_0+J_0,y)+ka_k(I_0+J_0,y)=0, \ \ \ \ \forall k\in\Z^*,
\end{eqnarray*}
which immediately yield $\lambda(I_0+J_0,y)=0, a_k(I_0+J_0,y)=0$ for all $k\in\Z^*$ and $b_k(I_0+J_0,y)=0$ for all $k\in\Z$.
Thus, {\rm (ii)}  holds.
\end{proof}
\begin{lemm}\label{prop4}
Let $\Delta$ be a 2-local derivation on $\mathcal{G}$ such that $\d(L_0)=\d(L_1)=0$. Then $\d(L_i)=0$ for all $i\in\Z$.
 \end{lemm}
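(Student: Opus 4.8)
The plan is to bootstrap from the two base points $L_0$ and $L_1$, whose derivations are already heavily constrained by Lemma \ref{prop3}(i). Since $\Delta(L_0)=\Delta(L_1)=0$, that lemma tells us that for every $y\in\mathcal{G}$,
\begin{equation*}
\Delta_{L_0,y}=\mathrm{ad}\bigl(a_0(L_0,y)L_0+b_0(L_0,y)H_0+c_0(L_0,y)I_0+d_0(L_0,y)J_0\bigr)+\lambda(L_0,y)D
\end{equation*}
and
\begin{equation*}
\Delta_{L_1,y}=\mathrm{ad}\bigl(a_1(L_1,y)L_1+b_0(L_1,y)H_0+c_1(L_1,y)I_1+d_1(L_1,y)J_1\bigr)+\lambda(L_1,y)D .
\end{equation*}

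Next I would fix $i\in\Z$ and take $y=L_i$ in each of these. Using $[H_0,L_i]=0$ and $D(L_i)=0$ together with the bracket relations of $\mathcal{G}$, one computation gives
\begin{equation*}
\Delta(L_i)=\Delta_{L_0,L_i}(L_i)=i\,a_0(L_0,L_i)L_i+i\,c_0(L_0,L_i)I_i+i\,d_0(L_0,L_i)J_i\in\C L_i+\C I_i+\C J_i,
\end{equation*}
while the other gives
\begin{equation*}
\Delta(L_i)=\Delta_{L_1,L_i}(L_i)=(i-1)\bigl(a_1(L_1,L_i)L_{i+1}+c_1(L_1,L_i)I_{i+1}+d_1(L_1,L_i)J_{i+1}\bigr)\in\C L_{i+1}+\C I_{i+1}+\C J_{i+1}.
\end{equation*}
Since $i\neq i+1$, the subspaces $\C L_i+\C I_i+\C J_i$ and $\C L_{i+1}+\C I_{i+1}+\C J_{i+1}$ intersect trivially, and $\Delta(L_i)$ lies in both; hence $\Delta(L_i)=0$. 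As $i$ was arbitrary, the lemma follows (the cases $i=0$ and $i=1$ are of course also covered, one of the two displays then being vacuously $0$).

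I do not expect a genuine obstacle here. The only point requiring care is the bookkeeping of which adjoint summands survive: at the base point $L_0$ only $L_0,H_0,I_0,J_0$ (and the outer derivation $D$) are allowed, and acting on $L_i$ the $H_0$- and $D$-contributions vanish, leaving $\Delta(L_i)$ in index $i$; at $L_1$ the same reasoning puts $\Delta(L_i)$ in index $i+1$. The conceptual heart of the proof is exactly this clash of indices, forced by the two admissible base points $L_0$ and $L_1$, which is precisely what Lemma \ref{prop3}(i) is designed to deliver.
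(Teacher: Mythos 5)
Your proposal is correct and follows essentially the same route as the paper: both invoke Lemma \ref{prop3}(i) at the base points $L_0$ and $L_1$, evaluate $\Delta_{L_1,L_i}$ and $\Delta_{L_0,L_i}$ on $L_i$, and conclude from the clash between the index-$i$ and index-$(i+1)$ components that $\Delta(L_i)=0$. The computations check out, so nothing further is needed.
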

\begin{proof} Since $\d(L_0)=\d(L_1)=0$, it follows from Lemma \ref{prop3} (i) that for  any $y\in\mathcal{G}$, we can assume that
\begin{eqnarray*}
&\d_{L_0,y}=\mathrm{ad}(a_0(L_0,y)L_0+b_0(L_0,y)H_0+c_0(L_0,y)I_0+d_0(L_0,y)J_0)+\lambda(L_0,y)D,\\
&\d_{L_1,y}=\mathrm{ad}(a_1(L_1,y)L_1+b_0(L_1,y)H_0+c_1(L_1,y)I_1+d_1(L_1,y)J_1)+\lambda(L_1,y)D.
\end{eqnarray*}
By taking $y=L_i$ in the above two equations, we respectively get
\begin{align*}
\Delta(L_i)&=\d_{L_0,L_i}(L_i)=[a_0(L_0,L_i)L_0+b_0(L_0,L_i)H_0+c_0(L_0,L_i)I_0+d_0(L_0,L_i)J_0, L_i]\\
&=ia_0(L_0,L_i)L_i+ic_0(L_0,L_i)I_i+id_0(L_0,L_i)J_i
\end{align*}
and
\begin{align*}
\Delta(L_i)&=\d_{L_1,L_i}(L_i)=[a_1(L_1,L_i)L_1+b_0(L_1,L_i)H_0+c_1(L_1,L_i)I_1+d_1(L_1,L_i)J_1, L_i]\\
&=(i-1)a_1(L_1,L_i)L_{i+1}+(i-1)c_1(L_1,L_i)I_{i+1}+(i-1)d_1(L_1,L_i)J_{i+1},
\end{align*}
from which one has
\begin{align*}
& ia_0(L_0,L_i)L_i+ic_0(L_0,L_i)I_i+id_0(L_0,L_i)J_i+(1-i)a_1(L_1,L_i)L_{i+1}\\
& \ \ \ +\ (1-i)c_1(L_1,L_i)I_{i+1}+(1-i)d_1(L_1,L_i)J_{i+1}=0.
\end{align*}
So $a_0(L_0,L_i)=c_0(L_0,L_i)=d_0(L_0,L_i)=0$ with $i\neq0$ and $a_1(L_1,L_i)=c_1(L_1,L_i)=d_1(L_1,L_i)=0$ with $i\neq1$. As a result, $\d(L_i)=0$, as desired.
\end{proof}
\begin{lemm}\label{prop5}
Let $\Delta$ be a 2-local derivation on $\mathcal{G}$ such that $\d(L_i)=0$ for all $i\in\Z$. Then for any $x=\sum_{t\in\Z}(\a_tL_t+\b_t H_t+\gamma_t I_t+\delta_t J_t)\in \mathcal{G}$, we have
$$\d(x)=\mu_{x}\sum_{t\in\Z}(\gamma_t I_t-\delta_t J_t)+\nu_{x}\sum_{t\in\Z}(\gamma_t I_t+\delta_t J_t),$$
where $\mu_{x}$ and $\nu_{x}$ are both complex numbers depending on $x$.
 \end{lemm}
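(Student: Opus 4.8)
The plan is to apply the 2-local property of $\Delta$ to the pairs $(L_i,x)$ for every $i\in\Z$ and then to compare, on both sides, the degrees (weights) of the basis vectors that occur. Fix $x=\sum_{t\in\Z}(\a_tL_t+\b_tH_t+\gamma_tI_t+\delta_tJ_t)\in\G$ and let $S$ be the finite set of those $t\in\Z$ for which at least one of $\a_t,\b_t,\gamma_t,\delta_t$ is nonzero. Since $\Delta(L_i)=0$ for all $i$, Lemma~\ref{prop3}(i) supplies, for each $i\in\Z$, a derivation
\[
\Delta_{L_i,x}=\mathrm{ad}\bigl(a_i(L_i,x)L_i+b_0(L_i,x)H_0+c_i(L_i,x)I_i+d_i(L_i,x)J_i\bigr)+\lambda(L_i,x)D
\]
with $\Delta(x)=\Delta_{L_i,x}(x)$.

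Next I would expand $\Delta_{L_i,x}(x)$ using the bracket relations. From $D(I_t)=I_t$, $D(J_t)=J_t$, $[H_0,I_t]=I_t$, $[H_0,J_t]=-J_t$ and $[H_0,L_t]=[H_0,H_t]=0$, the two summands $\lambda(L_i,x)D(x)$ and $b_0(L_i,x)\,\mathrm{ad}(H_0)(x)$ contribute $\lambda(L_i,x)\sum_t(\gamma_tI_t+\delta_tJ_t)$ and $b_0(L_i,x)\sum_t(\gamma_tI_t-\delta_tJ_t)$, both supported inside $S$. On the other hand $a_i(L_i,x)\,\mathrm{ad}(L_i)(x)$, $c_i(L_i,x)\,\mathrm{ad}(I_i)(x)$ and $d_i(L_i,x)\,\mathrm{ad}(J_i)(x)$ are each supported inside $S+i$, because $[L_i,Z_t]\in\C Z_{t+i}$ for $Z\in\{L,H,I,J\}$, while $\mathrm{ad}(I_i)$ and $\mathrm{ad}(J_i)$ annihilate the $I$- and $J$-parts of $x$ and send $L_t,H_t$ into $\C I_{t+i}$, resp.\ $\C J_{t+i}$. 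Now, assuming $x\neq0$ (the case $x=0$ being trivial), choose $i$ large enough that $S+i$ is disjoint from the finite set $S\cup\mathrm{supp}(\Delta(x))$. Writing $\Delta(x)=u+v$ with $u:=b_0(L_i,x)\,\mathrm{ad}(H_0)(x)+\lambda(L_i,x)D(x)$ supported in $S$ and $v$ supported in $S+i$, the vector $v=\Delta(x)-u$ then has support in $(S+i)\cap\bigl(S\cup\mathrm{supp}(\Delta(x))\bigr)=\emptyset$; hence $v=0$ and
\[
\Delta(x)=u=b_0(L_i,x)\sum_{t\in\Z}(\gamma_tI_t-\delta_tJ_t)+\lambda(L_i,x)\sum_{t\in\Z}(\gamma_tI_t+\delta_tJ_t).
\]
Setting $\mu_x:=b_0(L_i,x)$ and $\nu_x:=\lambda(L_i,x)$ for this fixed $i$ then gives the claimed formula.

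The point requiring the most care is the support bookkeeping: one must check that none of $\mathrm{ad}(L_i)(x),\ \mathrm{ad}(I_i)(x),\ \mathrm{ad}(J_i)(x)$ leaves behind a ``low-degree'' term, so that all of $v$ genuinely lies in $S+i$, while $\mathrm{ad}(H_0)(x)$ and $D(x)$ stay inside $S$; this is exactly where the concrete structure of the brackets of $\G$ is used. Once that is in place, disjointness of supports for $i\gg0$ immediately forces $v=0$, and the remainder is just reading off coefficients. The degenerate situations are harmless: if $x=0$ then $\Delta(x)=\Delta(0)=0$; if $x$ has no $I$- and no $J$-component then $u=0$ and one takes $\mu_x=\nu_x=0$. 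It is worth remarking, too, that $\mu_x$ and $\nu_x$ need not be uniquely determined by $x$ (for instance, when $x$ involves only $I$-terms one has $\mathrm{ad}(H_0)(x)=D(x)$), but the lemma asserts only their existence.
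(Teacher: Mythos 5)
Your proof is correct and follows essentially the same route as the paper: apply Lemma~\ref{prop3}(i) to the pairs $(L_i,x)$, expand $\Delta_{L_i,x}(x)$, and observe that for large $i$ the terms coming from $\mathrm{ad}(L_i)$, $\mathrm{ad}(I_i)$, $\mathrm{ad}(J_i)$ must vanish, leaving only the $\mathrm{ad}(H_0)$ and $D$ contributions. Your support bookkeeping (comparing $S+i$ with the fixed finite set $S\cup\mathrm{supp}(\Delta(x))$ for a single well-chosen $i$) is in fact a cleaner justification of the paper's somewhat informal ``taking enough different $i$, large enough'' step.
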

\begin{proof}
For $x=\sum_{t\in\Z}(\a_tL_t+\b_t H_t+\gamma_t I_t+\delta_t J_t)\in \mathcal{G}$, as $\d(L_i)=0$ for all $i\in\Z$, from Lemma \ref{prop3} {\rm (i)} we have
\begin{align*}
\Delta(x)&=\d_{L_i,x}(x)\\
&=[a_i(L_i,x)L_i+b_0(L_i,x)H_0+c_i(L_i,x)I_i+d_i(L_i,x)J_i, x]+\lambda(L_i,x)D(x)\\
&=\sum_{t\in\Z}(t-i)a_i(L_i,x)\a_tL_{t+i}+\sum_{t\in\Z}ta_i(L_i,x)\b_tH_{t+i}\\
& \ \ \ +\ \sum_{t\in\Z}((t-i)c_i(L_i,x)\a_t-c_i(L_i,x)\b_t+(t-i)a_i(L_i,x)\gamma_t)I_{i+t}\\
& \ \ \ +\ \sum_{t\in\Z}((t-i)d_i(L_i,x)\a_t+d_i(L_i,x)\b_t+(t-i)a_i(L_i,x)\delta_t)J_{i+t}\\
& \ \ \ +\  b_0(L_i,x)\sum_{t\in\Z}(\gamma_tI_t-\delta_tJ_t)+\lambda(L_i,x)\sum_{t\in\Z}(\gamma_tI_t+\delta_tJ_t).
\end{align*}
By taking enough different $i\in\Z$ in the above equation and, if necessary, let these $i's$ be large enough, we obtain that
$$\Delta(x)=b_0(L_i,x)\sum_{t\in\Z}(\gamma_tI_t-\delta_tJ_t)+\lambda(L_i,x)\sum_{t\in\Z}(\gamma_tI_t+\delta_tJ_t).$$
In addition, it is easy to see that $\mu_{x}:=b_0(L_i,x)$ and $\nu_{x}:=\lambda(L_i,x)$ are both independent on $i$. The proof of this lemma is completed.
\end{proof}
\begin{lemm}\label{prop6}
Let $\Delta$ be a 2-local derivation on $\mathcal{G}$ such that $\d(I_0+J_0)=0$ and $\d(L_i)=0$ for all $i\in\Z$. Then for any
$p\in\Z^*$ and $y\in\mathcal{G}$, there are $\xi_{p}^y, \eta_{p}^y,\varepsilon_{p}^y\in\C$ such that
$$\d_{L_p+I_{2p}+J_{2p},y}=\mathrm{ad}(\xi_{p}^yL_p+\eta_{p}^yI_p+\xi_{p}^yI_{2p}+\varepsilon_{p}^yJ_p+\xi_{p}^yJ_{2p}).$$
\end{lemm}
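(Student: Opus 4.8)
The plan is to reduce everything to showing that $\Delta(x_p)=0$, where $x_p:=L_p+I_{2p}+J_{2p}$; once this is in hand, the asserted form of $\Delta_{x_p,y}$ falls out of the same computation for every $y$.

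First I would apply Lemma~\ref{prop5} to $x=x_p$: since $\Delta(L_i)=0$ for all $i$, it gives $\Delta(x_p)=\mu_{x_p}(I_{2p}-J_{2p})+\nu_{x_p}(I_{2p}+J_{2p})$ for two scalars $\mu_{x_p},\nu_{x_p}$ depending only on $x_p$. Fix an arbitrary $y\in\mathcal{G}$ and, by Lemma~\ref{prop2}, write $\Delta_{x_p,y}=\mathrm{ad}\big(\sum_{k}a_kL_k+\sum_{k}b_kH_k+\sum_{k}c_kI_k+\sum_{k}d_kJ_k\big)+\lambda D$, suppressing the argument $(x_p,y)$. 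Expanding $\Delta_{x_p,y}(x_p)=[\,\cdot\,,L_p]+[\,\cdot\,,I_{2p}]+[\,\cdot\,,J_{2p}]+\lambda D(x_p)$ and matching the $L$-, $H$-, $I$- and $J$-parts with the expression for $\Delta(x_p)$ should yield: $a_k=0$ for $k\neq p$ and $b_k=0$ for $k\neq 0$ (here $p\neq 0$ is used); from the $I_{2p}$-, $I_{3p}$- and remaining $I_m$-coefficients, $c_j=0$ for $j\notin\{p,2p\}$, $c_{2p}=a_p$, and $b_0+\lambda=\mu_{x_p}+\nu_{x_p}$; and symmetrically $d_j=0$ for $j\notin\{p,2p\}$, $d_{2p}=a_p$, and $\lambda-b_0=\nu_{x_p}-\mu_{x_p}$. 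Solving the last two scalar relations gives $b_0=\mu_{x_p}$ and $\lambda=\nu_{x_p}$, while $c_p$ and $d_p$ remain unconstrained because $I_p$ and $J_p$ commute with $x_p$. This description of $\Delta_{x_p,y}$ holds for \emph{every} $y$, only $a_p,c_p,d_p$ varying with $y$.

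The decisive step is to specialize $y=I_0+J_0$ and use the hypothesis $\Delta(I_0+J_0)=0$, that is, $\Delta_{x_p,I_0+J_0}(I_0+J_0)=0$. By the previous step $\Delta_{x_p,I_0+J_0}=\mathrm{ad}(\xi L_p+\mu_{x_p}H_0+\eta I_p+\xi I_{2p}+\varepsilon J_p+\xi J_{2p})+\nu_{x_p}D$ for some $\xi,\eta,\varepsilon$, and applying this to $I_0+J_0$ produces $-\xi p\,(I_p+J_p)+(\mu_{x_p}+\nu_{x_p})I_0+(\nu_{x_p}-\mu_{x_p})J_0=0$. Reading off the $I_0$- and $J_0$-coefficients forces $\mu_{x_p}=\nu_{x_p}=0$, so $\Delta(x_p)=0$. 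Substituting $\mu_{x_p}=\nu_{x_p}=0$ back into the description of $\Delta_{x_p,y}$ for an arbitrary $y$ gives $b_0=\lambda=0$, hence $\Delta_{x_p,y}=\mathrm{ad}(a_pL_p+c_pI_p+a_pI_{2p}+d_pJ_p+a_pJ_{2p})$; putting $\xi_p^y:=a_p$, $\eta_p^y:=c_p$, $\varepsilon_p^y:=d_p$ gives exactly the claimed formula.

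I expect the only delicate point to be the index-shift bookkeeping in the component-by-component comparison of $\Delta_{x_p,y}(x_p)$ with $\Delta(x_p)$: one has to notice that it is the $I_{3p}$- (resp. $J_{3p}$-) coefficient that ties $c_{2p}$ (resp. $d_{2p}$) to $a_p$, so that the coefficients of $L_p$, $I_{2p}$ and $J_{2p}$ in the inner part are forced to agree, and that $p\neq 0$ makes $p$, $2p$, $3p$ pairwise distinct so these comparisons are genuinely independent. Beyond that, the argument is routine linear algebra.
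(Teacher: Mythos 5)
Your proposal is correct and follows essentially the same route as the paper: both hinge on showing $\Delta(L_p+I_{2p}+J_{2p})=0$ by playing the pair $(L_p+I_{2p}+J_{2p},\,I_0+J_0)$ against Lemma \ref{prop5}, and then reading off the coefficients from $\Delta_{L_p+I_{2p}+J_{2p},y}(L_p+I_{2p}+J_{2p})=0$. The only difference is organizational: the paper first kills $\mu,\nu$ by evaluating the witnessing derivation for that pair at $L_p+I_{2p}+J_{2p}$ (using Lemma \ref{prop3}(ii)), whereas you constrain $\Delta_{L_p+I_{2p}+J_{2p},y}$ for general $y$ first and then kill $\mu,\nu$ by evaluating at $I_0+J_0$; the coefficient comparisons are otherwise identical.
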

\begin{proof}
According to $\d(L_i)=0$ for all $i\in\Z$ and Lemma \ref{prop5}, we have
$$\d(L_p+I_{2p}+J_{2p})=\mu_{L_p+I_{2p}+J_{2p}}(I_{2p}-J_{2p})+\nu_{L_p+I_{2p}+J_{2p}}(I_{2p}+J_{2p}),$$
where $\mu_{L_p+I_{2p}+J_{2p}}, \nu_{L_p+I_{2p}+J_{2p}}\in\C$. Combining $\d(I_0+J_0)=0$ with Lemma \ref{prop3} {\rm (ii)}, we know that
\begin{align*}
\Delta(L_p+I_{2p}+J_{2p})&=\d_{I_0+J_0,L_p+I_{2p}+J_{2p}}(L_p+I_{2p}+J_{2p})\\
&=[a_0(I_0+J_0,L_p+I_{2p}+J_{2p})L_0+\sum_{k\in\Z}c_k(I_0+J_0,L_p+I_{2p}+J_{2p})I_k\\
& \ \ \ +\ \sum_{k\in\Z}d_k(I_0+J_0,L_p+I_{2p}+J_{2p})J_k, L_p+I_{2p}+J_{2p}]\\
&=pa_0(I_0+J_0,L_p+I_{2p}+J_{2p})L_p+\sum_{k\in\Z}(p-k)c_k(I_0+J_0,L_p+I_{2p}+J_{2p})I_{p+k}\\
& \ \ \ +\ \sum_{k\in\Z}(p-k)d_k(I_0+J_0,L_p+I_{2p}+J_{2p})J_{p+k}\\
& \ \ \ +\ 2pa_0(I_0+J_0,L_p+I_{2p}+J_{2p})I_{2p}+2pa_0(I_0+J_0,L_p+I_{2p}+J_{2p})J_{2p}.
\end{align*}
Equating the above two expressions for $\Delta(L_p+I_{2p}+J_{2p})$ and comparing the coefficients of $L_p, I_{2p}$ and $J_{2p}$ of both sides, we immediately get $\mu_{L_p+I_{2p}+J_{2p}}=\nu_{L_p+I_{2p}+J_{2p}}=0$. Thus,
\begin{equation}\label{ac}\Delta(L_p+I_{2p}+J_{2p})=0.\end{equation}
For any $y\in\mathcal{G}$, by Lemma \ref{prop2}, we can assume that
\begin{align*}
\Delta_{L_p+I_{2p}+J_{2p},y}&=\mathrm{ad}(\sum_{k\in\Z}a_k(L_p+I_{2p}+J_{2p},y)L_k+\sum_{k\in\Z}b_k(L_p+I_{2p}+J_{2p},y)H_k\\
& \ \ \ +\ \sum_{k\in\Z}c_k(L_p+I_{2p}+J_{2p},y)I_k+\sum_{k\in\Z}d_k(L_p+I_{2p}+J_{2p},y)J_k\\
& \ \ \ +\ \lambda(L_p+I_{2p}+J_{2p},y)D.
\end{align*}
Using this and \eqref{ac}, we obtain
\begin{align}\label{opp}
&\Delta(L_p+I_{2p}+J_{2p})\nonumber\\
&=\Delta_{L_p+I_{2p}+J_{2p},y}(L_p+I_{2p}+J_{2p})\nonumber\\
&=[\sum_{k\in\Z}a_k(L_p+I_{2p}+J_{2p},y)L_k+\sum_{k\in\Z}b_k(L_p+I_{2p}+J_{2p},y)H_k\nonumber\\
& \ \ \ +\ \sum_{k\in\Z}c_k(L_p+I_{2p}+J_{2p},y)I_k+\sum_{k\in\Z}d_k(L_p+I_{2p}+J_{2p},y)J_k,L_p+I_{2p}+J_{2p}]\nonumber\\
& \ \ \ +\ \lambda(L_p+I_{2p}+J_{2p},y)D(L_p+I_{2p}+J_{2p})\nonumber\\
&=\sum_{k\in\Z}a_k(L_p+I_{2p}+J_{2p},y)((p-k)L_{p+k}+(2p-k)I_{2p+k}+(2p-k)J_{2p+k})\nonumber\\
& \ \ \ +\ \sum_{k\in\Z}b_k(L_p+I_{2p}+J_{2p},y)(-kH_{p+k}+I_{2p+k}-J_{2p+k})\nonumber\\
& \ \ \ +\ \sum_{k\in\Z}(p-k)(c_k(L_p+I_{2p}+J_{2p},y)I_{p+k}+d_k(L_p+I_{2p}+J_{2p},y)J_{p+k})\nonumber\\
& \ \ \ +\ \lambda(L_p+I_{2p}+J_{2p},y)(I_{2p}+J_{2p})=0.
\end{align}
From this, we deduce that $(p-k)a_k(L_p+I_{2p}+J_{2p},y)=0$  and $kb_k(L_p+I_{2p}+J_{2p},y)=0$, which in turn imply
$a_k(L_p+I_{2p}+J_{2p},y)=0$ for $k\neq p$ and $b_k(L_p+I_{2p}+J_{2p},y)=0$  for $k\neq 0$. Using these and observing the coefficients of $I_{2p}, J_{2p}$ in \eqref{opp}, we respectively get
\begin{equation*}
\aligned
&b_0(L_p+I_{2p}+J_{2p},y)+\lambda(L_p+I_{2p}+J_{2p},y)=0,\\
&-b_0(L_p+I_{2p}+J_{2p},y)+\lambda(L_p+I_{2p}+J_{2p},y)=0,
\endaligned
\end{equation*}
forcing $b_0(L_p+I_{2p}+J_{2p},y)=\lambda(L_p+I_{2p}+J_{2p},y)=0$. Similarly, by observing the coefficients of $I_{3p}, J_{3p}$ in \eqref{opp}, we respectively have
\begin{equation*}
\aligned
&-pc_{2p}(L_p+I_{2p}+J_{2p},y)+pa_p(L_p+I_{2p}+J_{2p},y)=0,\\
&-pd_{2p}(L_p+I_{2p}+J_{2p},y)+pa_p(L_p+I_{2p}+J_{2p},y)=0,
\endaligned
\end{equation*}
which yield $a_p(L_p+I_{2p}+J_{2p},y)=c_{2p}(L_p+I_{2p}+J_{2p},y)=d_{2p}(L_p+I_{2p}+J_{2p},y)$. Moreover, by observing the coefficients of $I_{k}, J_{k}, k\neq 2p,3p$ in \eqref{opp}, we obtain $c_k(L_p+I_{2p}+J_{2p},y)=d_k(L_p+I_{2p}+J_{2p},y)=0$ for all $k\neq p,2p$.
Denote $a_p(L_p+I_{2p}+J_{2p},y)=\xi_{p}^y, c_p(L_p+I_{2p}+J_{2p},y)=\eta_{p}^y$ and $d_p(L_p+I_{2p}+J_{2p},y)=\varepsilon_{p}^y$. This completes the proof.
\end{proof}
\begin{lemm}\label{prop7}
Let $\Delta$ be a 2-local derivation on $\mathcal{G}$ such that $\d(L_0)=\d(L_1)=\d(I_0+J_0)=0$. Then $\d(x)=0$ for all $x\in\mathcal{G}$.
\end{lemm}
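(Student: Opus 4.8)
The plan is to finish off $\Delta$ by combining the three preceding lemmas with the natural $\Z$-grading of $\mathcal{G}$. Write $\mathcal{G}=\bigoplus_{m\in\Z}\mathcal{G}_m$ with $\mathcal{G}_m=\mathrm{span}_{\C}\{L_m,H_m,I_m,J_m\}$; the defining brackets give $[\mathcal{G}_m,\mathcal{G}_n]\subseteq\mathcal{G}_{m+n}$. Since $\Delta(L_0)=\Delta(L_1)=0$ by hypothesis, Lemma \ref{prop4} yields $\Delta(L_i)=0$ for all $i\in\Z$, so the hypotheses of Lemmas \ref{prop5} and \ref{prop6} are now in force.

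First I would fix an arbitrary $x=\sum_{t\in\Z}(\a_t L_t+\b_t H_t+\gamma_t I_t+\delta_t J_t)\in\mathcal{G}$ and take $N\in\N$ with $\mathrm{supp}(x)\subseteq\{-N,\dots,N\}$ (the case $x=0$ is already covered by Lemma \ref{prop5}). By Lemma \ref{prop5}, $\Delta(x)=\mu_x\sum_{t\in\Z}(\gamma_t I_t-\delta_t J_t)+\nu_x\sum_{t\in\Z}(\gamma_t I_t+\delta_t J_t)$ for some $\mu_x,\nu_x\in\C$, and since the $I$- and $J$-indices occurring here lie in $\mathrm{supp}(x)$, this shows $\Delta(x)\in\bigoplus_{|m|\le N}\mathcal{G}_m$. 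Next I would choose $p\in\Z^*$ with $p>2N$ and set $w=L_p+I_{2p}+J_{2p}$. By Lemma \ref{prop6} there exist $\xi,\eta,\varepsilon\in\C$ with $\Delta_{w,x}=\mathrm{ad}(\xi L_p+\eta I_p+\xi I_{2p}+\varepsilon J_p+\xi J_{2p})$, so the element being adjointed decomposes as a degree-$p$ part $\xi L_p+\eta I_p+\varepsilon J_p\in\mathcal{G}_p$ plus a degree-$2p$ part $\xi I_{2p}+\xi J_{2p}\in\mathcal{G}_{2p}$.

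Now apply the $2$-local property: $\Delta(x)=\Delta_{w,x}(x)=[\,\xi L_p+\eta I_p+\varepsilon J_p,\,x\,]+[\,\xi I_{2p}+\xi J_{2p},\,x\,]$. By the grading, the first bracket lies in $\bigoplus_{t\in\mathrm{supp}(x)}\mathcal{G}_{t+p}$ and the second in $\bigoplus_{t\in\mathrm{supp}(x)}\mathcal{G}_{t+2p}$, so $\Delta(x)\in\bigoplus_{p-N\le m\le 2p+N}\mathcal{G}_m$. Comparing this with the previous paragraph, $\Delta(x)$ lies in the intersection of $\bigoplus_{|m|\le N}\mathcal{G}_m$ and $\bigoplus_{p-N\le m\le 2p+N}\mathcal{G}_m$; but $p>2N$ forces $p-N>N$, so these index ranges are disjoint and hence $\Delta(x)=0$. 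As $x\in\mathcal{G}$ was arbitrary, this gives $\Delta\equiv 0$. I do not expect a genuine obstacle here, since Lemmas \ref{prop4}--\ref{prop6} have already carried out the substantive work; the only point needing care is the grading bookkeeping — recording that $\mathrm{ad}$ of a degree-$p$ (resp.\ degree-$2p$) element shifts homogeneous components by $p$ (resp.\ $2p$), and hence that taking $p$ larger than twice the ``width'' of $x$ pushes $\Delta_{w,x}(x)$ entirely outside the range of degrees in which $\Delta(x)$ could possibly be supported.
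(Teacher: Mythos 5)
Your proof is correct, and it follows the same skeleton as the paper's: reduce to $\d(L_i)=0$ for all $i\in\Z$ via Lemma \ref{prop4}, pin down the shape of $\d(x)$ via Lemma \ref{prop5}, and then play this off against $\d(x)=\d_{L_p+I_{2p}+J_{2p},x}(x)$ using Lemma \ref{prop6}. Where you genuinely diverge is in the final step. The paper expands $[\xi_{p}^xL_p+\eta_{p}^xI_p+\xi_{p}^xI_{2p}+\varepsilon_{p}^xJ_p+\xi_{p}^xJ_{2p},\,x]$ coefficient by coefficient and runs a case analysis (whether $(\a_t)_{t\in\Z}$ vanishes, then whether $(\b_t)_{t\in\Z}$ vanishes, then the subcase $\b_0\neq0$), in each case first extracting $\xi_p^x=0$ from a particular coefficient and then invoking the informal ``take enough different $p$, large enough'' to kill the remaining terms. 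Your grading argument replaces all of this by one uniform observation: by Lemma \ref{prop5}, $\d(x)$ is supported in degrees $|m|\le N$, while $\mathrm{ad}$ of an element of $\mathcal{G}_p\oplus\mathcal{G}_{2p}$ shifts the support of $x$ into degrees in $[p-N,\,2p+N]$, so a single choice $p>2N$ already forces $\d(x)=0$, with no need to determine $\xi_p^x,\eta_p^x,\varepsilon_p^x$ individually. This is shorter, eliminates the case split, and makes precise the ``large enough $p$'' heuristic that the paper leaves implicit; the only added cost is the easily checked bookkeeping that $[\mathcal{G}_m,\mathcal{G}_n]\subseteq\mathcal{G}_{m+n}$.
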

\begin{proof}
Take any but fixed $x=\sum_{t\in\Z}(\a_tL_t+\b_t H_t+\gamma_t I_t+\delta_t J_t)\in \mathcal{G}$, where $\a_t, \b_t, \gamma_t, \delta_t\in\C$ for any $t\in\Z$. It follows from $\d(L_0)=\d(L_1)=0$ and Lemma \ref{prop4} that
\begin{equation}\label{vb}\d(L_i)=0,\ \ \ \ \forall i\in\Z.
\end{equation}
This along with Lemma \ref{prop5} gives
\begin{align}\label{vbn}
\Delta(x)&=\d(\sum_{t\in\Z}(\a_tL_t+\b_t H_t+\gamma_t I_t+\delta_t J_t))\nonumber\\
&=\mu_{x}\sum_{t\in\Z}(\gamma_t I_t-\delta_t J_t)+\nu_{x}\sum_{t\in\Z}(\gamma_t I_t+\delta_t J_t)
\end{align}
for some $\mu_{x}, \nu_{x}\in\C$. For any $p\in\Z^*$, by \eqref{vb}, $\d(I_0+J_0)=0$ and Lemma \ref{prop6}, we have
$$\d_{L_p+I_{2p}+J_{2p},x}=\mathrm{ad}(\xi_{p}^xL_p+\eta_{p}^xI_p+\xi_{p}^xI_{2p}+\varepsilon_{p}^xJ_p+\xi_{p}^xJ_{2p})$$
for some $\xi_{p}^x, \eta_{p}^x,\varepsilon_{p}^x\in\C$. Now we compute
\begin{align}\label{vnm}
\Delta(x)&=\Delta_{L_p+I_{2p}+J_{2p},x}(x)\nonumber\\
&=[\xi_{p}^xL_p+\eta_{p}^xI_p+\xi_{p}^xI_{2p}+\varepsilon_{p}^xJ_p+\xi_{p}^xJ_{2p}, \sum_{t\in\Z}(\a_tL_t+\b_t H_t+\gamma_t I_t+\delta_t J_t)]\nonumber\\
&=\sum_{t\in\Z}(t-p)\a_t\xi_{p}^xL_{p+t}+\sum_{t\in\Z}t\b_t\xi_{p}^xH_{p+t}\nonumber\\
& \ \ \ +\ \sum_{t\in\Z}((t-p)\a_t\eta_{p}^x-\b_t\eta_{p}^x+(t-p)\gamma_t\xi_{p}^x)I_{p+t}\nonumber\\
& \ \ \ +\ \sum_{t\in\Z}((t-p)\a_t\varepsilon_{p}^x+\b_t\varepsilon_{p}^x+(t-p)\delta_t\xi_{p}^x)J_{p+t}\nonumber\\
& \ \ \ +\ \sum_{t\in\Z}((t-2p)\a_t\xi_{p}^x-\b_t\xi_{p}^x)I_{2p+t}+\sum_{t\in\Z}((t-2p)\a_t\xi_{p}^x+\b_t\xi_{p}^x)J_{2p+t}.
\end{align}
Next, the proof is divided into the following two cases.
\begin{case}
$(\a_{t})_{t\in\Z}$ is not a zero sequence.
\end{case}
In this case, there is a nonzero term $\a_{t_0}L_{t_0}$ in $x=\sum_{t\in\Z}(\a_tL_t+\b_t H_t+\gamma_t I_t+\delta_t J_t)$ for some $t_0\in\Z$.
For any $p\in\Z^*$ in \eqref{vnm} with $p\neq t_0$, by equating  the coefficients of $L_{p+t_0}$ in \eqref{vbn} and \eqref{vnm}, we have $(t_0-p)\a_{t_0}\xi_{p}^x=0$, i.e., $\xi_{p}^x=0$. By \eqref{vbn} and \eqref{vnm}, we have
\begin{align*}
\Delta(x)&=\mu_{x}\sum_{t\in\Z}(\gamma_t I_t-\delta_t J_t)+\nu_{x}\sum_{t\in\Z}(\gamma_t I_t+\delta_t J_t)\\
&=\sum_{t\in\Z}((t-p)\a_t\eta_{p}^x-\b_t\eta_{p}^x)I_{p+t}+\sum_{t\in\Z}((t-p)\a_t\varepsilon_{p}^x+\b_t\varepsilon_{p}^x)J_{p+t}.
\end{align*}
By taking enough different $p$ in the above equation and, if necessary, let these $p's$ be large enough, we get $\d(x)=0$.
\begin{case}
$(\a_{t})_{t\in\Z}$ is a zero sequence.
\end{case}
In this case, we have  $x=\sum_{t\in\Z}(\b_t H_t+\gamma_t I_t+\delta_t J_t)$.
\begin{subcase}
$(\b_{t})_{t\in\Z}$ is not a zero sequence.
\end{subcase}
Consider first the situation that there is a nonzero term $\b_{t_0}H_{t_0}$ in  $x=\sum_{t\in\Z}(\b_t H_t+\gamma_t I_t+\delta_t J_t)$ for some $t_0\in\Z^*$. For any $p\in\Z^*$ in \eqref{vnm} with $p\neq t_0$, by equating  the coefficients of $H_{p+t_0}$ in \eqref{vbn} and \eqref{vnm}, we have $t_0\b_{t_0}\xi_{p}^x=0$, forcing $\xi_{p}^x=0$. Putting \eqref{vbn} and \eqref{vnm} together gives
\begin{align*}
\Delta(x)&=\mu_{x}\sum_{t\in\Z}(\gamma_t I_t-\delta_t J_t)+\nu_{x}\sum_{t\in\Z}(\gamma_t I_t+\delta_t J_t)\\
&=-\sum_{t\in\Z}\b_t\eta_{p}^xI_{p+t}+\sum_{t\in\Z}\b_t\varepsilon_{p}^xJ_{p+t}.
\end{align*}
Similarly, we obtain $\d(x)=0$ by taking enough different $p$ in the above equation and, if necessary, let these $p's$ be large enough.
Assume now that $\b_t=0$ for all $t\in\Z^*$, i.e., $\b_0\neq0$. Then  \eqref{vbn} and \eqref{vnm} become
\begin{align*}
\Delta(x)&=\mu_{x}\sum_{t\in\Z}(\gamma_t I_t-\delta_t J_t)+\nu_{x}\sum_{t\in\Z}(\gamma_t I_t+\delta_t J_t)\\
&=-\b_0\eta_{p}^xI_{p}+\sum_{t\in\Z}(t-p)\gamma_t\xi_{p}^xI_{p+t}+\b_0\varepsilon_{p}^xJ_{p}\\
& \ \ \ +\ \sum_{t\in\Z}(t-p)\delta_t\xi_{p}^xJ_{p+t}-\b_0\xi_{p}^xI_{2p}+\b_0\xi_{p}^xJ_{2p}.
\end{align*}
Also, we obtain $\d(x)=0$ by taking enough different $p$ in the above equation and, if necessary, let these $p's$ be large enough.
\begin{subcase}
$(\b_{t})_{t\in\Z}$ is a zero sequence.
\end{subcase}
In this case, we have $x=\sum_{t\in\Z}(\gamma_t I_t+\delta_t J_t)$. Equation  \eqref{vbn} along with  \eqref{vnm} becomes
\begin{align*}
\Delta(x)&=\mu_{x}\sum_{t\in\Z}(\gamma_t I_t-\delta_t J_t)+\nu_{x}\sum_{t\in\Z}(\gamma_t I_t+\delta_t J_t)\\
&=\sum_{t\in\Z}(t-p)\xi_{p}^x(\gamma_t I_{p+t}+\delta_t J_{p+t}).
\end{align*}
One can immediately get $\d(x)=0$ by taking enough different $p$ in the above equation and, if necessary, let these $p's$ be large enough. The proof is completed.
\end{proof}
Now we can formulate our main result in this section.
\begin{theo}\label{nm}
Every  2-local derivation on the planar Galilean conformal algebra $\mathcal{G}$ is a derivation.
\end{theo}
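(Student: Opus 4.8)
The plan is to peel off honest derivations from an arbitrary 2-local derivation until the hypotheses of Lemma \ref{prop7} are met. The basic mechanism is the elementary observation that if $\d$ is a 2-local derivation on $\G$ and $E\in\mathrm{Der}(\G)$, then $\d-E$ is again a 2-local derivation: for any $x,y\in\G$ the derivation $\d_{x,y}-E$ agrees with $\d-E$ on both $x$ and $y$. Hence we may freely subtract derivations from $\d$ without leaving the class of 2-local derivations, and the task reduces to exhibiting, for a given $\d$, a derivation $E$ with $\d=E$.

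So let $\d$ be a 2-local derivation on $\G$. First choose $\d_{L_0,L_1}\in\mathrm{Der}(\G)$ with $\d(L_0)=\d_{L_0,L_1}(L_0)$ and $\d(L_1)=\d_{L_0,L_1}(L_1)$, and set $\d_1:=\d-\d_{L_0,L_1}$. Then $\d_1$ is a 2-local derivation with $\d_1(L_0)=\d_1(L_1)=0$, so Lemma \ref{prop4} gives $\d_1(L_i)=0$ for all $i\in\Z$, and Lemma \ref{prop5} applied to $x=I_0+J_0$ then produces $\mu,\nu\in\C$ with
$$\d_1(I_0+J_0)=\mu(I_0-J_0)+\nu(I_0+J_0).$$
Now observe that the derivation $E:=\mu\,\mathrm{ad}(H_0)+\nu D$, with $D$ as in Lemma \ref{prop1}, satisfies $E(L_i)=\mu[H_0,L_i]+\nu D(L_i)=0$ for every $i\in\Z$, while $E(I_0+J_0)=\mu[H_0,I_0+J_0]+\nu D(I_0+J_0)=\mu(I_0-J_0)+\nu(I_0+J_0)=\d_1(I_0+J_0)$. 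Therefore $\d_2:=\d_1-E$ is a 2-local derivation with $\d_2(L_i)=0$ for all $i\in\Z$ and $\d_2(I_0+J_0)=0$; in particular $\d_2(L_0)=\d_2(L_1)=\d_2(I_0+J_0)=0$.

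At this point Lemma \ref{prop7} applies to $\d_2$ and yields $\d_2(x)=0$ for all $x\in\G$, i.e. $\d_2\equiv0$. Unwinding the two reductions, $\d=\d_{L_0,L_1}+E\in\mathrm{Der}(\G)$, which is the assertion of the theorem. I do not expect a genuine obstacle here: all of the real work has already been carried out in Lemmas \ref{prop3}--\ref{prop7}, and the theorem is essentially a bookkeeping corollary of them. The one point that requires a moment's care is the construction of the correcting derivation $E$: the key is that both $\mathrm{ad}(H_0)$ and the outer derivation $D$ annihilate every $L_i$, yet on $I_0+J_0$ they produce the two independent vectors $I_0-J_0$ and $I_0+J_0$, which is exactly what is needed to match the residual value $\d_1(I_0+J_0)$ supplied by Lemma \ref{prop5}.
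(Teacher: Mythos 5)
Your proposal is correct and follows essentially the same route as the paper: subtract $\d_{L_0,L_1}$, invoke Lemmas \ref{prop4} and \ref{prop5} to pin down the residual value on $I_0+J_0$, cancel it with $\mu\,\mathrm{ad}(H_0)+\nu D$, and conclude via Lemma \ref{prop7}. The only (welcome) addition is your explicit justification that subtracting a derivation from a 2-local derivation again yields a 2-local derivation, which the paper uses tacitly.
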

\begin{proof}
Let $\d$ be a 2-local derivation on $\mathcal{G}$. Take a derivation $\d_{L_0,L_1}$ such that
$$\d(L_0)=\d_{L_0,L_1}(L_0) \quad {\rm and}\quad \d(L_1)=\d_{L_0,L_1}(L_1).$$
Set $\d_1=\d-\d_{L_0,L_1}$. Then $\d_1$ is a 2-local derivation such that $\d_1(L_0)=\d_1(L_1)=0$. It follows from Lemma  \ref{prop4} that $\d_1(L_i)=0$ for all $i\in\Z$. Combined this with Lemma \ref{prop5}, we have $\d_1(I_0+J_0)=\mu_{I_0+J_0}(I_0-J_0)+\nu_{I_0+J_0}(I_0+J_0)$
for some $\mu_{I_0+J_0},\nu_{I_0+J_0}\in\C$. Now we set $\d_2=\d_1-\mu_{I_0+J_0}{\rm ad}(H_0)-\nu_{I_0+J_0}D$, then $\d_2$ is a 2-local derivation such that
\begin{eqnarray*}
&\d_2(L_0)=\d_1(L_0)-\mu_{I_0+J_0}[H_0, L_0]-\nu_{I_0+J_0}D(L_0)=0,\\
&\d_2(L_1)=\d_1(L_1)-\mu_{I_0+J_0}[H_0, L_1]-\nu_{I_0+J_0}D(L_1)=0, \\
&\d_2(I_0+J_0)=\d_1(I_0+J_0)-\mu_{I_0+J_0}[H_0, I_0+J_0]-\nu_{I_0+J_0}D(I_0+J_0)\\
&=\mu_{I_0+J_0}(I_0-J_0)+\nu_{I_0+J_0}(I_0+J_0)-\mu_{I_0+J_0}(I_0-J_0)-\nu_{I_0+J_0}(I_0+J_0)=0.\end{eqnarray*}
By Lemma \ref{prop7}, we see that $\d_2=\d-\d_{L_0,L_1}-\mu_{I_0+J_0}{\rm ad}(H_0)-\nu_{I_0+J_0}D\equiv0$. Thus $\d=\d_{L_0,L_1}+\mu_{I_0+J_0}{\rm ad}(H_0)+\nu_{I_0+J_0}D$ is a derivation, completing the proof.
\end{proof}


\end{document}